\documentclass[11pt,british]{paper}
\usepackage[T1]{fontenc}
\usepackage[latin9]{inputenc}
\usepackage[letterpaper]{geometry}
\geometry{verbose,tmargin=3cm,bmargin=3cm,lmargin=3cm,rmargin=3cm}
\pagestyle{plain}
\usepackage{float}
\usepackage{amsthm}
\usepackage{amsmath}
\usepackage{amssymb}
\usepackage[authoryear]{natbib}

\makeatletter

\providecommand{\tabularnewline}{\\}
\floatstyle{ruled}
\newfloat{algorithm}{tbp}{loa}
\providecommand{\algorithmname}{Algorithm}
\floatname{algorithm}{\protect\algorithmname}

\numberwithin{equation}{section}
\numberwithin{figure}{section}
\theoremstyle{plain}
\newtheorem{thm}{\protect\theoremname}

\bibpunct{[}{]}{;}{a}{}{,}

\makeatother

\usepackage{babel}
\providecommand{\theoremname}{Theorem}

\begin{document}

\title{Optimising a nonlinear utility function in multi-objective integer
programming}

\author{Melih Ozlen, Meral Azizo\u{g}lu, Benjamin A.~Burton}
\maketitle
\begin{abstract}
In this paper we develop an algorithm to optimise a nonlinear utility
function of multiple objectives over the integer efficient set. Our
approach is based on identifying and updating bounds on the individual
objectives as well as the optimal utility value. This is done using
already known solutions, linear programming relaxations, utility function
inversion, and integer programming. We develop a general optimisation
algorithm for use with $k$ objectives, and we illustrate our approach
using a tri-objective integer programming problem.\end{abstract}
\begin{keywords}
Multiple objective optimisation, integer programming, nonlinear utility
function
\end{keywords}

\section{Introduction}

The majority of studies reported in the optimisation literature consider
a single objective, such as minimising cost or maximising profit.
However, in practice, there are usually many objectives that need
to be considered simultaneously. In particular, the increasing effect
of globalisation brings safety, environmental impact and sustainability
issues, and hence their related performance measures, into consideration.
The practical aim is to find solutions that are not only profitable
but also safe, green and sustainable. 

Multi-Objective Integer Programming (MOIP) considers discrete feasible
sets defined by integer variables. The main focus of literature on
MOIP has been on enumerating the entire integer nondominated set,
or on optimising a single linear utility function. However, many practical
situations require the optimisation of a nonlinear utility function
that combines multiple objectives into one. Prominent applications
of such problems include, but are not limited to, pricing, routing,
production planning, resource allocation, portfolio selection, capital
budgeting, and designing networks. In such applications a utility
function can capture precisely the decision maker's preferences for
how to balance conflicting objectives, such as cost versus environmental
impact in routing problems, or profit versus risk in portfolio selection.

Besides their practical importance, these optimisation problems are
theoretically challenging as they---even their simpler single-objective
versions---fall into the class of NP-hard problems. Despite their
practical and theoretical importance, there is no reported research
on such problems, at least to the best of our knowledge. Recognising
this gap in the literature, in this paper we address the optimisation
of an explicitly defined nonlinear utility function of multiple objectives
over the integer efficient set, and provide a general framework for
its resolution. 

Throughout this paper we assume that the utility function is strictly
increasing with respect to each individual objective. Such monotonicity
is standard for a utility function, whose role is to combine many
individual objectives into a single global ``utility objective''.

A MOIP problem defines a discrete feasible set of efficient points
and corresponding nondominated objective vectors, and the optimal
value of our nonlinear utility function can always be found in this
nondominated set. A na{\"i}ve solution to our optimisation problem
could therefore be to generate all nondominated objective vectors
for the MOIP problem, and then to evaluate the utility function in
each case. However, this na{\"i}ve approach would be highly impractical,
since the number of nondominated vectors can be extremely large in
general.

Recognising this, we develop a more sophisticated approach to optimise
a nonlinear utility function over the integer efficient set, in which
we generate only a smaller subset of nondominated objective vectors.
To avoid the generation of unpromising solutions, we compute objective
optimality bounds by combining the best solution found so far with
the nonlinear utility function. To generate the subset of promising
nondominated objective vectors, we use an algorithm by \citet{ozlenazizoglu2009ejor}
that recursively solves MOIP problems with fewer objectives. 

The rest of the paper is organised as follows. Section \ref{sec:lit}
reviews the related literature. In Section \ref{sec:tri} we explain
our algorithm in full and prove its correctness. Section \ref{sec:example}
offers a detailed illustration of the workings of the algorithm, using
an instance of a tri-objective integer programming problem. We conclude
in Section \ref{sec:conclusion}.

\section{Literature\label{sec:lit}}

The best-studied cases of MOIP are Multi-Objective Combinatorial Optimisation
(MOCO) problems. These are special cases of MOIP that have special
constraint set structures. \citet{ehrgottgadbileux2000orspec,ehrgottgandibleux2004top}
provide rich surveys of MOCO studies that use exact and approximate
approaches, respectively. They address some special problem structures
and discuss their solution methodologies. \citet{ehrgottgadibleuxedsmcopt}
survey other MOCO problems including, but not limited to, nonlinear
programming, scheduling and multi-level programming. These recent
studies show a considerable growth in the MOCO literature. However,
research on generating all nondominated objective vectors for MOIP,
or on optimisation over that set, is still scarce.

\citet{kleinhannan1982ejor} develop an approach based on the sequential
solutions of the single-objective Integer Programming (IP) problems.
Their algorithm generates a subset, but not necessarily the whole
set, of all nondominated objective vectors. \citet{sylvacrema2004ejor}
improve the approach of \citet{kleinhannan1982ejor} by defining a
weighted combination of all objectives, and their approach guarantees
to generate all nondominated objective vectors. 

\citet{klamrothetaljogo} and \citet{ehrgott2006aor} study the general
MOIP problem. \citet{klamrothetaljogo} discuss the importance of
using upper bounds on the objective function values when generating
the nondominated set, and define composite functions to obtain such
bounds. To form the composite functions, they propose classical optimisation
methods such as cutting plane and branch and bound. \citet{ehrgott2006aor}
discusses various scalarisation techniques, and proposes a generalised
technique that encompasses the others as special cases. He also proposes
an elastic constraint method to identify all nondominated points,
whose power is dependent on the existence of sophisticated problem-dependent
methods for solving the single objective version.

\citet{ozlenazizoglu2009ejor} develop a general approach to generate
all nondominated objective vectors for the MOIP problem, by recursively
identifying upper bounds on individual objectives using problems with
fewer objectives. \citet{dhaenenslemesretalbi2010} present a similar
but parallel algorithm for MOCO problems, that again involves recursively
solving problems with fewer objectives. \citet{przybylskigandibleuxehrgotti2010joc}
and \citet{ozpeynirci20102302} propose similar algorithms to identify
all extreme nondominated objective vectors for a MOIP problem. They
both utilise a weighted single-objective function and partition the
weight space to identify the set of extreme supported nondominated
objective vectors, adapting a method first proposed for Multi-Objective
Linear Programming (MOLP) by \citet{bensonSun2000,bensonsun2002}.
\citet{przybylskigandibleuxehrgott2010do} extend the two phase method,
originally developed to generate the nondominated set for bi-objective
problems, to handle problems with three or more objectives. 

There are some recent studies proposing preference-based methods for
specific MOCO problems. \citet{lehuedeetal2006} optimise a utility
function in Multi-Attribute Utility Theory (MAUT) that is composed
of piecewise-linear utility functions and a Choquet integral in a
constraint programming-based algorithm. \citet{PernySpanjaard2005}
propose algorithms that consider any preorder for a multiobjective
minimum weight spanning tree problem. For the same problem, \citet{Galandetal2010ejor}
propose a branch and bound algorithm minimising a utility function
composed of a concave Choquet integral and a partial convex utility
function. \citet{galandetal2010lnems} also propose a branch and bound
algorithm maximising a convex Choquet integral function for the multiobjective
knapsack problem. Contrary to these preference-based studies, the
method we propose in this paper is not problem-specific: it can be
used with any MOIP problem, and requires no particular assumption
about the utility function beyond the natural assumption of monotonicity.

There are few studies dealing with general MOIP problems in which
the aim is to optimise a function. \citet{abbaschaabane2006} and
\citet{jorge2009} deal with optimising a linear function over the
efficient set of a MOIP problem.

\section{The algorithm\label{sec:tri}}

In its general form, our algorithm optimises a nonlinear utility function
of $k$ objectives over the integer programming efficient set. This
problem can be defined precisely as:\smallskip{}

Min~$G(f_{1}(x),f_{2}(x),\ldots,f_{k}(x))$

s.t.~$x\in X$,

where $X$ is the set of feasible points defined by $Ax=b$, $x_{j}\geq0$
and $x_{j}\mathbb{\in Z}$ for all $j\in\{1,2,\ldots,n\}$.\smallskip{}

The individual objectives are defined as $f_{1}(x)=\sum_{j=1}^{n}c_{1j}x_{j}$,
$f_{2}(x)=\sum_{j=1}^{n}c_{2j}x_{j}$, \ldots, and $f_{k}(x)=\sum_{j=1}^{n}c_{kj}x_{j}$,
where $c_{ij}\in\mathbb{Z}$ for all $i\in\{1,2,\ldots,k\}$ and $j\in\{1,2,\ldots,n$\}.
The nonlinear utility function $G(f_{1}(x),f_{2}(x),\ldots,f_{k}(x))$
is assumed to be continuous and strictly increasing in each individual
objective function $f_{1}(x)$, $f_{2}(x)$, \ldots, $f_{k}(x)$.

We refer to $G$ as a \emph{utility function} because it combines
the multiple objectives $f_{1},\ldots,f_{k}$. However, we \emph{minimise}
$G$ for consistency with other authors such as \citet{kleinhannan1982ejor}
and \citet{przybylskigandibleuxehrgott2010do}.

A point $x'\in X$ is called \emph{$k$-objective efficient} if and
only if there is no $x\in X$ such that $f_{i}(x)\leq f_{i}(x')$
for each $i\in\{1,2,\ldots,k\}$ and $f_{i}(x)<f_{i}(x')$ for at
least one $i$. The resulting objective vector $(f_{1}(x'),f_{2}(x'),\ldots,f_{k}(x'))$
is said to be \emph{$k$-objective nondominated}. One typically works
in objective space instead of variable space, since each nondominated
objective vector might correspond to a large number of efficient points
in variable space.

A point $x'\in X$ is called\emph{ optimal} if and only if there is
no $x\in X$ for which $G(f_{1}(x),f_{2}(x),\allowbreak\ldots,\allowbreak f_{k}(x))<G(f_{1}(x'),f_{2}(x'),\ldots,f_{k}(x'))$.
Because our utility function is strictly increasing, any optimal point
must also be $k$-objective efficient in variable space, and must
yield a $k$-objective nondominated vector in objective space.

Our proposed method of finding an optimal point is based on a shrinking
set of bounds for the $k$ individual objectives. We update the lower
bounds using linear programming relaxations. Where possible, we update
the upper bounds by using the lower bounds and inverting the utility
function; where necessary, we update the upper bounds using the algorithm
of \citet{ozlenazizoglu2009ejor}. The shrinking bounds allow us to
avoid unpromising portions of the integer efficient set, and the method
of updating these bounds is designed to solve integer programs only
when absolutely necessary. 

Algorithm \ref{Flo:alg} gives the stepwise description of our procedure
to find an optimal point for the nonlinear utility function $G(f_{1}(x),f_{2}(x),\ldots,f_{k}(x))$.
Key variables that we use in this algorithm include:
\begin{itemize}
\item $G^{BEST}$, the best known value of the utility function;
\item $f_{i}^{LB}$ and $f_{i}^{UB}$, the current lower and upper bounds
on the values of the individual objective functions.
\end{itemize}
{\small }
\begin{algorithm}
{\small \caption{Optimising $G$ over the efficient set of a MOIP problem}
\label{Flo:alg}}{\small \par}

{\small Step 0. Find some initial set of points $I$.}{\small \par}

{\small \hspace{1.25cm}Initialise $G^{BEST}=\min_{x\in I}G(f_{1}(x),f_{2}(x),\ldots,f_{k}(x))$.}{\small \par}

{\small \smallskip{}
}{\small \par}

{\small \hspace{1.25cm}Solve Min $f_{i}(x)$ s.t $x\in X$ for each
$i\in\{1,2,\ldots,k\}$.}{\small \par}

{\small \hspace{1.25cm}Set each $f_{i}^{LB}$ to the corresponding
optimal objective value.}{\small \par}

{\small \hspace{1.25cm}Set each $f_{i}^{UB}$ to $\infty$.}{\small \par}

{\small \medskip{}
}{\small \par}

{\small Step 1. If $G(f_{1}^{LB},f_{2}^{LB},\ldots,f_{k}^{LB})\geq G^{BEST}$
then STOP.}{\small \par}

{\small \hspace{1.25cm}For each objective $i\in\{1,2,\ldots,k\}$,
find $f_{i}^{A}$ that solves}{\small \par}

{\small \hspace{1.25cm}\quad{}$G(f_{1}^{LB},\ldots,f_{i-1}^{LB},f_{i}^{A},f_{i+1}^{LB},\ldots,f_{k}^{LB})=G^{BEST}$.}{\small \par}

{\small \hspace{1.25cm}\quad{}If this is impossible because $G(f_{1}^{LB},\ldots,f_{i-1}^{LB},z,f_{i+1}^{LB},\ldots,f_{k}^{LB})<G^{BEST}$
for all $z$,}{\small \par}

{\small \hspace{1.25cm}\quad{}\quad{}set $f_{i}^{A}=\infty$.}{\small \par}

{\small \hspace{1.25cm}Set $f_{i}^{UB}=\min(\left\lfloor f_{i}^{A}\right\rfloor ,f_{i}^{UB})$
for each $i\in\{1,2,\ldots,k\}$.}{\small \par}

{\small \medskip{}
}{\small \par}

{\small Step 2. For each objective $i\in\{1,2,\ldots,k\}$:\smallskip{}
}{\small \par}

{\small \hspace{1.25cm}\quad{}Solve the LP relaxation of}{\small \par}

{\small \hspace{1.25cm}\quad{}\quad{}Min $f_{i}(x)$ s.t. $x\in X$,
$f_{1}(x)\leq f_{1}^{UB}$, $f_{2}(x)\leq f_{2}^{UB}$, \ldots, and
$f_{k}(x)\leq f_{k}^{UB}$.}{\small \par}

{\small \hspace{1.25cm}\quad{}Let $f_{i}^{LP}=f_{i}(x^{*})$ be
the optimal objective value and $x^{\ast}$ be the corresponding}{\small \par}

{\small \hspace{1.25cm}\quad{}\quad{}optimal solution, and set
$f_{i}^{LB}=\left\lceil f_{i}^{LP}\right\rceil $.}{\small \par}

{\small \hspace{1.25cm}\quad{}If $x^{*}$ is integer and $G(f_{1}(x^{*}),f_{2}(x^{*}),\ldots,f_{k}(x^{*}))<G^{BEST}$,}{\small \par}

{\small \hspace{1.25cm}\quad{}\quad{}set $G^{BEST}=G(f_{1}(x^{*}),f_{2}(x^{*}),\ldots,f_{k}(x^{*}))$.}{\small \par}

{\small \smallskip{}
}{\small \par}

{\small \hspace{1.25cm}If the lower bound $f_{i}^{LB}$ is updated
for any objective $i$ then go to Step 1.}{\small \par}

{\small \medskip{}
}{\small \par}

{\small Step 3. Use \citet{ozlenazizoglu2009ejor} to update the upper
bound $f_{k}^{UB}$. Specifically:\smallskip{}
}{\small \par}

{\small \hspace{1.25cm}Begin generating all nondominated objective
vectors for the $(k-1)$-objective MOIP problem}{\small \par}

{\small \hspace{1.25cm}\quad{}Min $f_{1}(x)$, $f_{2}(x)$, \ldots,
$f_{k-1}(x)$}{\small \par}

{\small \hspace{1.25cm}\quad{}s.t. $x\in X$, $f_{1}(x)\leq f_{1}^{\mathit{{UB}}}$,
$f_{2}(x)\leq f_{2}^{\mathit{{UB}}}$, \ldots, $f_{k}(x)\leq f_{k}^{\mathit{{UB}}}$.}{\small \par}

{\small \hspace{1.25cm}Each time we generate a $(k-1)$-objective
vector, fix the first $k-1$ objectives to the}{\small \par}

{\small \hspace{1.25cm}\quad{}values found and minimise $f_{k}(x)$.
This two-step ``lexicographic optimisation'' yields a}{\small \par}

{\small \hspace{1.25cm}\quad{}$k$-objective nondominated objective
vector for the original problem.}{\small \par}

{\small \hspace{1.25cm}If no feasible point exists then STOP.\smallskip{}
}{\small \par}

{\small \hspace{1.25cm}Each time we generate a nondominated $k$-objective
vector $f^{\ast}$,}{\small \par}

{\small \hspace{1.25cm}\quad{}test whether $G(f_{1}^{\ast},f_{2}^{\ast},\ldots,f_{k}^{\ast})<G^{BEST}$.}{\small \par}

{\small \hspace{1.25cm}\quad{}If true, set $G^{BEST}=G(f_{1}^{\ast},f_{2}^{\ast},\ldots,f_{k}^{\ast})$
and go to Step 1 immediately.\smallskip{}
}{\small \par}

{\small \hspace{1.25cm}Let $S$ be the set of all nondominated objective
vectors that were generated above.}{\small \par}

{\small \hspace{1.25cm}Set $f_{k}^{UB}=\max_{f\in S}f_{k}-1$ and
go to Step 2.}
\end{algorithm}
{\small \par}

Algorithm \ref{Flo:alg} terminates with a $G^{BEST}$ value that
is the minimum of $G(f_{1}(x),f_{2}(x),\ldots,f_{k}(x))$ among all
efficient points; moreover, each time we update $G^{\mathit{{BEST}}}$
we can record the corresponding efficient $x\in X$. Stated formally:
\begin{thm}
\label{thm:tri}Algorithm \ref{Flo:alg} finds the minimum value of
$G(f_{1}(x),f_{2}(x),\ldots,f_{k}(x))$ among all integer efficient
points for the MOIP problem, and also identifies a corresponding $x\in X$
that attains this minimum.\end{thm}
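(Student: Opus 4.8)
The plan is to prove correctness in two parts: first, that every value of $G^{BEST}$ the algorithm ever records is a genuine utility value attained at some integer efficient point, so the final $G^{BEST}$ is an \emph{upper bound} on (indeed, achieved by) a point that is a candidate for the true optimum; and second, that no integer efficient point can have utility value strictly below the final $G^{BEST}$, so that value is in fact the minimum. Recording the corresponding $x$ is immediate, since every assignment to $G^{BEST}$ (in Step~0, Step~2, or Step~3) comes from evaluating $G$ at an explicit feasible point, and by the monotonicity remark preceding the theorem any such point either is efficient or is dominated by an efficient point with the same or smaller utility value.

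First I would establish the loop invariant that drives everything: at every moment, the interval box $\prod_i [f_i^{LB}, f_i^{UB}]$ contains the objective vector of every integer efficient point whose utility is strictly less than $G^{BEST}$. For the lower bounds this holds because each $f_i^{LB}$ is obtained by minimising $f_i(x)$ (via an LP relaxation and a ceiling, which is valid since the $c_{ij}$ are integers) over a feasible region that still contains all such points. For the upper bounds I must check the two update mechanisms separately. The Step~1 update sets $f_i^{UB} = \lfloor f_i^A\rfloor$ where $f_i^A$ solves $G(f_1^{LB},\dots,f_i^A,\dots,f_k^{LB}) = G^{BEST}$: here strict monotonicity of $G$ in coordinate $i$ guarantees that any point with $f_i > f_i^A$, and with the other coordinates at least their lower bounds, has $G \ge G^{BEST}$ and so may be discarded; the floor is again justified by integrality. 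The Step~3 update sets $f_k^{UB} = \max_{f\in S} f_k - 1$, where $S$ is the set of $k$-objective nondominated vectors generated inside the current box; the key point is that the recursive algorithm of \citet{ozlenazizoglu2009ejor} enumerates \emph{all} nondominated vectors of the restricted MOIP, so any integer efficient point still in the box with $f_k \ge \max_{f\in S} f_k$ must coincide in its utility-relevant information with some $f\in S$ already tested against $G^{BEST}$, and hence lowering $f_k^{UB}$ below that maximum loses no undiscovered improving point.

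Next I would argue termination and the stopping conditions. The \textbf{STOP} in Step~1 fires when $G(f_1^{LB},\dots,f_k^{LB}) \ge G^{BEST}$; by monotonicity every point in the current box then has $G \ge G(f_1^{LB},\dots,f_k^{LB}) \ge G^{BEST}$, so by the invariant there is no integer efficient point with utility below $G^{BEST}$, and the current $G^{BEST}$ is optimal. The \textbf{STOP} in Step~3 fires when the restricted MOIP has no feasible point, meaning the box is empty of feasible integer points and again no improving point remains. For termination I would use a monotonicity-of-progress argument: every return to Step~1 or Step~2 is triggered either by a strict decrease in $G^{BEST}$ or by a strict tightening of some bound (a raised $f_i^{LB}$ in Step~2, or a lowered $f_k^{UB}$ in Step~3). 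Since all objective values are integers drawn from a finite range once the initial box is established, and $G$ takes finitely many values on that finite integer box, no infinite sequence of strict improvements is possible, so the algorithm halts.

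The main obstacle I expect is the Step~3 correctness argument, specifically verifying that shrinking $f_k^{UB}$ to $\max_{f\in S} f_k - 1$ is safe. One must rule out the scenario where an improving integer efficient point has $f_k$ value exactly equal to some discarded level but differs from every generated vector $f\in S$ in the other coordinates; this requires carefully invoking the completeness guarantee of the \citet{ozlenazizoglu2009ejor} enumeration (that $S$ contains every nondominated vector of the restricted problem, each already tested) together with the fact that a dominated point cannot have strictly smaller utility than the nondominated vector dominating it, by strict monotonicity of $G$. Once that single step is nailed down, the rest of the invariant maintenance and the termination count are routine. A secondary, milder subtlety is confirming that the floor and ceiling operations never discard a feasible integer point — this follows directly from integrality of all $f_i$ on $X$, but should be stated explicitly.
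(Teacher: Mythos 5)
Your proposal is correct and follows essentially the same route as the paper's proof: the same box-of-bounds loop invariant (yours phrased as ``every improving efficient point lies in the box'', the paper's as ``either $G^{BEST}$ is optimal or the optimum is attainable within the bounds''), the same step-by-step validation of the Step~1 inversion-and-floor update, the Step~2 LP-relaxation-and-ceiling update, and the Step~3 update, and the same termination argument via strictly shrinking integer bounds. The only notable difference is one of emphasis: where the paper simply cites \citet{ozlenazizoglu2009ejor} for the safety of setting $f_k^{UB}=\max_{f\in S}f_k-1$, you correctly single this out as the crux and sketch the completeness-plus-strict-monotonicity argument yourself.
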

\begin{proof}
As the algorithm runs we maintain the following invariants:
\begin{itemize}
\item The utility $G^{BEST}$ is obtainable; that is, $G^{BEST}=G(f_{1}(x),f_{2}(x),\ldots,f_{k}(x))$
for some feasible integer point $x\in X$. 
\item Either $G^{BEST}$ is already equal to the optimal utility, or else
the optimal utility can be achieved for some point $x\in X$ with
$f_{i}^{LB}\leq f_{i}(x)\leq f_{i}^{UB}$ for each objective $i\in\{1,\ldots,k\}$.
\end{itemize}
In essence, the lower and upper bounds $f_{i}^{LB}$ and $f_{i}^{UB}$
are used to bound the region of the integer efficient set that remains
to be examined. It is easy to see that these invariants hold:
\begin{itemize}
\item In Step 0, each $f_{i}^{UB}$ is $\infty$, each $f_{i}^{LB}$ is
the global minimum for $f_{i}(x)$, and $G^{BEST}$ is obtained from
a known point $x\in X$.
\item In Step 1, any point $x\in X$ with $f_{i}(x)>f_{i}^{A}$ must have
a utility worse than $G^{BEST}$. This is because $G$ is strictly
increasing, and so any such $x$ must satisfy $G(f_{1}(x),\ldots,f_{i}(x),\ldots,\allowbreak f_{k}(x))>G(f_{1}^{\mathit{{LB}}}(x),\ldots,f_{i}^{A},\ldots,f_{k}^{\mathit{{LB}}}(x))$.
The revised upper bound of $\lfloor f_{i}^{A}\rfloor$ is valid because
each $f_{i}(x)\in\mathbb{Z}$.
\item In Step 2, the revised lower bounds are valid because each optimal
LP value $f_{i}^{LP}$ is equal to or better than the corresponding
optimal IP value. Again we can round $\left\lceil f_{i}^{\mathit{{LP}}}\right\rceil $
because each $f_{i}(x)\in\mathbb{Z}$.
\item In Step 3, any revision to $G^{BEST}$ is valid because it comes from
an efficient point: in particular, \citet{ozlenazizoglu2009ejor}
show that each solution $x\in X$ to the lexicographic optimisation
in Step 3 is a $k$-objective efficient point. The revision to $f_{k}^{UB}$
is valid because \citet{ozlenazizoglu2009ejor} show there cannot
exist any other efficient point having objective function value $f_{k}(x)$
between $\max_{f\in S}f_{k}$ and the previous value of $f_{k}^{UB}$. 
\end{itemize}
Note that $f_{i}^{A}$ always exists in Step~1 because $G$ is continuous,
and because our invariants give $G(f_{1}^{LB},\ldots,f_{i-1}^{LB},f_{i}^{LB},f_{i+1}^{LB},\ldots,f_{k}^{LB})\leq G^{BEST}$;
moreover, this $f_{i}^{A}$ is simple to find using standard univariate
search techniques.

To prove that the algorithm terminates: Even if no bounds are updated
in Steps 1 or 2, the procedure of \citet{ozlenazizoglu2009ejor} will
reduce the bound $f_{k}^{UB}$ in Step 3. This ensures that the bounds
shrink during every loop through the algorithm, and because these
shrinking bounds are integers we must terminate after finitely many
steps.

To prove that the algorithm gives the optimal utility: Upon termination,
either $G^{BEST}$ is at least as good as anything obtainable within
our bounds $f_{i}^{LB}\leq f_{i}(x)\leq f_{i}^{UB}$ (if we STOP in
Step 1 or 2), or else these bounds have been reduced so far that the
remaining integer efficient set is empty (if we STOP in Step 3). Either
way, we know from our invariants that $G^{BEST}$ is obtainable and
that no better utility is possible.
\end{proof}
Some final notes:
\begin{itemize}
\item In Step~2, hoping for an integer solution $x^{\ast}$ is optimistic.
However, this simple test is cheap, and it can speed up the computation
in the case where $x^{\ast}\in\mathbb{Z}$.
\item \citet{ozlenazizoglu2009ejor} implement Step~3 using $\epsilon$
multipliers for objectives, thereby transforming the lexicographic
optimisation from Step~3 into a set of single-objective problems;
see the paper for details. Other algorithms for generating nondominated
objective vectors and/or efficient points may be used here instead
of \citet{ozlenazizoglu2009ejor} (for instance, in cases where highly
efficient problem-specific algorithms are known).
\item Although the MOIP problem in Step~3 is the computational bottleneck
of the algorithm, it has only $k-1$ objectives, and we do not require
all solutions (since we exit step~3 as soon as the bound $G^{\mathit{{BEST}}}$
is updated). Both of these features make it significantly easier than
the initial $k$-objective problem.
\end{itemize}

\section{An example problem\label{sec:example}}

In this section we illustrate our approach on a concrete example with
$k=3$. This is a tri-objective assignment problem of size $5\times5$,
with the nonlinear utility function $G(f_{1}(x),f_{2}(x),f_{3}(x))=f_{1}(x){}^{3}+f_{2}(x){}^{3}+f_{3}(x){}^{3}$.
The individual objective coefficients for the problem, taken from
\citet{ozlenazizoglu2009ejor}, are provided in Table \ref{Flo:example-data}.

\begin{table}[h]
\caption{Objective coefficients for the example problem}

\begin{tabular}{|c|c|c|c|c|c|c|c|c|c|c|c|c|c|c|c|c|c|c|c|}
\cline{1-6} \cline{8-13} \cline{15-20} 
$c_{1}$ & $1$ & $2$ & $3$ & $4$ & $5$ &  & $c_{2}$ & $1$ & $2$ & $3$ & $4$ & $5$ &  & $c_{3}$ & $1$ & $2$ & $3$ & $4$ & $5$\tabularnewline
\cline{1-6} \cline{8-13} \cline{15-20} 
$1$ & 99 & 19 & 74 & 55 & 41 &  & $1$ & 28 & 39 & 19 & 42 & 7 &  & $1$ & 29 & 67 & 2 & 90 & 7\tabularnewline
\cline{1-6} \cline{8-13} \cline{15-20} 
$2$ & 23 & 81 & 93 & 39 & 49 &  & $2$ & 66 & 98 & 49 & 83 & 42 &  & $2$ & 84 & 37 & 64 & 64 & 87\tabularnewline
\cline{1-6} \cline{8-13} \cline{15-20} 
$3$ & 66 & 21 & 63 & 24 & 38 &  & $3$ & 73 & 26 & 42 & 13 & 54 &  & $3$ & 54 & 11 & 100 & 83 & 61\tabularnewline
\cline{1-6} \cline{8-13} \cline{15-20} 
$4$ & 65 & 41 & 7 & 39 & 66 &  & $4$ & 46 & 42 & 28 & 27 & 99 &  & $4$ & 75 & 63 & 69 & 96 & 3\tabularnewline
\cline{1-6} \cline{8-13} \cline{15-20} 
$5$ & 93 & 30 & 5 & 4 & 12 &  & $5$ & 80 & 17 & 99 & 59 & 68 &  & $5$ & 66 & 99 & 34 & 33 & 21\tabularnewline
\cline{1-6} \cline{8-13} \cline{15-20} 
\end{tabular}

\label{Flo:example-data}
\end{table}

The iterations of Algorithm \ref{Flo:alg} are summarised in Table
\ref{Flo:example-img}. Columns in this table show the solutions of
LP and IP problems in objective space, and the updated values of lower
and upper bounds on the individual objective functions and the utility
function as they appear in Algorithm \ref{Flo:alg}. For the columns
representing bounds, an empty cell indicates that the value has not
changed from the line above.

\begin{table}
\tiny\renewcommand{\arraystretch}{1.5}%
\begin{tabular}{|c|c|l|c|c|c|c|c|c|c|c|c|c|c|}
\hline 
Step & \#IP &  & $f_{1}(x)$ & $f_{2}(x)$ & $f_{3}(x)$ & $G^{BEST}$ & $f_{1}^{LB}$ & $f_{2}^{LB}$ & $\mbox{\ensuremath{f_{3}^{LB}}}$ & $G^{LB}$ & $f_{1}^{UB}$ & $f_{2}^{UB}$ & $f_{3}^{UB}$\tabularnewline
\hline 
$0$ & $1$ & IP Min $f_{1}(x)$ & $86$ & $214$ & $324$ & $44,448,624$ & $86$ & $-\infty$ & $-\infty$ & $636,056$ & $\infty$ & $\infty$ & $\infty$\tabularnewline
\hline 
$0$ & $2$ & IP Min $f_{2}(x)$ & $209$ & $128$ & $367$ &  &  & $128$ &  & $2,733,208$ &  &  & \tabularnewline
\hline 
$0$ & $3$ & IP Min $f_{3}(x)$ & $291$ & $348$ & $129$ &  &  &  & $129$ & $4,879,897$ &  &  & \tabularnewline
\hline 
$1$ &  & Find $f_{1}^{A},f_{2}^{A},f_{3}^{A}$ &  &  &  &  &  &  &  &  & $342$ & $346$ & $346$\tabularnewline
\hline 
$2$ &  & LP Min $f_{i}(x),i=1,2,3$ & $86$ & $130.2$ & $129.1$ &  &  & $131$ & $130$ & $5,081,147$ &  &  & \tabularnewline
\hline 
$1$ &  & Find $f_{1}^{A},f_{2}^{A},f_{3}^{A}$ &  &  &  &  &  &  &  &  &  &  & \tabularnewline
\hline 
$3$ & $4$ & IP Min $f_{1}(x),f_{2}(x)$  & $86$ & $214$ & $324$ &  &  &  &  &  &  &  & \tabularnewline
\hline 
$3$ & $5$ & IP Min $f_{1}(x),f_{2}(x)$ & $96$ & $186$ & $204$ & $15,809,256$ &  &  &  &  &  &  & \tabularnewline
\hline 
$1$ &  & Find $f_{1}^{A},f_{2}^{A},f_{3}^{A}$ &  &  &  &  &  &  &  &  & $224$ & $234$ & $234$\tabularnewline
\hline 
$2$ &  & LP Min $f_{i}(x),i=1,2,3$ & $93.5$ & $169.8$ & $157.3$ &  & $94$ & $170$ & $158$ & $9,687,896$ &  &  & \tabularnewline
\hline 
$1$ &  & Find $f_{1}^{A},f_{2}^{A},f_{3}^{A}$ &  &  &  &  &  &  &  &  & $190$ & $222$ & $215$\tabularnewline
\hline 
$2$ &  & LP Min $f_{i}(x),i=1,2,3$ & $95.1$ & $178.4$ & $167.6$ &  & $96$ & $179$ & $168$ & $11,361,707$ &  &  & \tabularnewline
\hline 
$1$ &  & Find $f_{1}^{A},f_{2}^{A},f_{3}^{A}$ &  &  &  &  &  &  &  &  & $174$ & $216$ & $209$\tabularnewline
\hline 
$2$ &  & LP Min $f_{i}(x),i=1,2,3$ & $95.6$ & $181.3$ & $173.7$ &  &  & $182$ & $174$ & $12,181,328$ &  &  & \tabularnewline
\hline 
$1$ &  & Find $f_{1}^{A},f_{2}^{A},f_{3}^{A}$ &  &  &  &  &  &  &  &  & $165$ & $212$ & $207$\tabularnewline
\hline 
$2$ &  & LP Min $f_{i}(x),i=1,2,3$ & $95.8$ & $182.4$ & $177.6$ &  &  & $183$ & $178$ & $12,652,975$ &  &  & \tabularnewline
\hline 
$1$ &  & Find $f_{1}^{A},f_{2}^{A},f_{3}^{A}$ &  &  &  &  &  &  &  &  & $159$ & $210$ & $206$\tabularnewline
\hline 
$2$ &  & LP Min $f_{i}(x),i=1,2,3$ & $95.8$ & $183.1$ & $179.7$ &  &  & $184$ & $180$ & $12,946,240$ &  &  & \tabularnewline
\hline 
$1$ &  & Find $f_{1}^{A},f_{2}^{A},f_{3}^{A}$ &  &  &  &  &  &  &  &  & $155$ & $208$ & \tabularnewline
\hline 
$2$ &  & LP Min $f_{i}(x),i=1,2,3$ & $95.9$ & $183.5$ & $181.6$ &  &  &  & $182$ & $13,142,808$ &  &  & \tabularnewline
\hline 
$1$ &  & Find $f_{1}^{A},f_{2}^{A}$ &  &  &  &  &  &  &  &  & $152$ & $207$ & \tabularnewline
\hline 
$2$ &  & LP Min $f_{i}(x),i=1,2,3$ & $95.9$ & $183.7$ & $182.6$ &  &  &  & $183$ & $13,242,727$ &  &  & \tabularnewline
\hline 
$1$ &  & Find $f_{1}^{A},f_{2}^{A}$ &  &  &  &  &  &  &  &  & $151$ & $206$ & \tabularnewline
\hline 
$2$ &  & LP Min $f_{i}(x),i=1,2,3$ & $95.9$ & $183.7$ & $183.5$ &  &  &  & $184$ & $13,343,744$ &  &  & \tabularnewline
\hline 
$1$ &  & Find $f_{1}^{A},f_{2}^{A}$ &  &  &  &  &  &  &  &  & $149$ & $205$ & \tabularnewline
\hline 
$2$ &  & LP Min $f_{i}(x),i=1,2,3$ & $95.9$ & $183.7$ & $184.4$ &  &  &  & $185$ & $13,445,865$ &  &  & \tabularnewline
\hline 
$1$ &  & Find $f_{1}^{A},f_{2}^{A}$ &  &  &  &  &  &  &  &  & $148$ & $204$ & \tabularnewline
\hline 
$2$ &  & LP Min $f_{i}(x),i=1,2,3$ & $95.9$ & $183.8$ & $185.3$ &  &  &  & $186$ & $13,549,096$ &  &  & \tabularnewline
\hline 
$1$ &  & Find $f_{1}^{A},f_{2}^{A}$ &  &  &  &  &  &  &  &  & $146$ &  & \tabularnewline
\hline 
$2$ &  & LP Min $f_{i}(x),i=1,2,3$ &  & $183.9$ & $185.5$ &  &  &  &  &  &  &  & \tabularnewline
\hline 
$3$ & $6$ & IP Min $f_{1}(x),f_{2}(x)$  & $96$ & $186$ & $204$ &  &  &  &  &  &  &  & $203$\tabularnewline
\hline 
$3$ & $7$ & IP Min $f_{1}(x),f_{2}(x)$ & inf. &  &  &  &  &  &  &  &  &  & \tabularnewline
\hline 
$2$ &  & LP Min $f_{i}(x),i=1,2$ & $97.3$ & $184.6$ &  &  & $98$ & $185$ &  & $13,707,673$ &  &  & \tabularnewline
\hline 
$1$ &  & Find $f_{1}^{A},f_{2}^{A},f_{3}^{A}$ &  &  &  &  &  &  &  &  & $144$ & $203$ & \tabularnewline
\hline 
$2$ &  & LP Min $f_{i}(x),i=1,2,3$ & $97.3$ & $184.7$ & $186.4$ &  &  &  & $187$ & $13,812,020$ &  &  & \tabularnewline
\hline 
$1$ &  & Find $f_{1}^{A},f_{2}^{A}$ &  &  &  &  &  &  &  &  & $143$ & $202$ & \tabularnewline
\hline 
$2$ &  & LP Min $f_{i}(x),i=1,2,3$ & $97.3$ & $184.7$ & $187.3$ &  &  &  & $188$ & $13,917,489$ &  &  & \tabularnewline
\hline 
$1$ &  & Find $f_{1}^{A},f_{2}^{A}$ &  &  &  &  &  &  &  &  & $141$ & $201$ & \tabularnewline
\hline 
$2$ &  & LP Min $f_{i}(x),i=1,2,3$ & $97.3$ & $184.8$ & $188.3$ &  &  &  & $189$ & $14,024,086$ &  &  & \tabularnewline
\hline 
$1$ &  & Find $f_{1}^{A},f_{2}^{A}$ &  &  &  &  &  &  &  &  & $139$ & $200$ & \tabularnewline
\hline 
$2$ &  & LP Min $f_{i}(x),i=1,2,3$ & $97.3$ & $184.9$ & $189.2$ &  &  &  & $190$ & $14,141,817$ &  &  & \tabularnewline
\hline 
$1$ &  & Find $f_{1}^{A},f_{2}^{A}$ &  &  &  &  &  &  &  &  & $137$ &  & \tabularnewline
\hline 
$2$ &  & LP Min $f_{i}(x),i=2,3$ &  & $184.9$ & $189.4$ &  &  &  &  &  &  &  & \tabularnewline
\hline 
$3$ & $8$ & IP Min $f_{1}(x),f_{2}(x)$ & inf. &  &  & $15,809,256$ & $98$ & $185$ & $190$ & $14,131,817$ & $137$ & $200$ & $203$\tabularnewline
\hline 
\end{tabular}

\caption{Iteration details of Algorithm \ref{Flo:alg} on the example problem
instance}

\label{Flo:example-img}
\end{table}

For the initialisation in Step 0 the procedure solves three lexicographic
IPs, minimising objectives in the following lexicographic order: 1-2-3,
2-1-3, 3-1-2. There are of course many alternate methods of initialisation;
we use lexicographic IPs here because they produce a good spread of
nondominated objective vectors.

Step 1 then sets (and later updates) upper bounds on the individual
objective functions based on the current best solution, $G^{BEST}$.
Step 2 updates the lower bounds by solving the linear programming
relaxations with the upper bound constraints on the individual objective
function values. Steps 1 and 2 are iterated for as long as they continue
to update these lower and upper bounds. When the bounds cannot be
updated further, Step 3 generates tri-objective nondominated objective
vectors by generating a bi-objective nondominated set based on the
upper bounds $f_{1}^{UB}$, $f_{2}^{UB}$ and $f_{3}^{UB}$. If Step
3 is able to improve upon the best utility value $G^{BEST}$, it returns
to Step 1; otherwise it updates $f_{3}^{UB}$ and returns to Step
2. In the final iteration, where Step 3 fails to find any feasible
points within the current bounds, the entire algorithm terminates.

The optimal solution with $G(96,186,204)=15\,809\,256$ is identified
at an early stage but it takes a large number of iterations to prove
its optimality. We see from Table \ref{Flo:example-img} that our
shrinking bounds perform very well for this example: Algorithm \ref{Flo:alg}
requires the solution of just eight IPs to find the optimal utility
value. If we were to use the na\"ive method from the introduction
and generate all nondominated objective vectors then we would require
a total of 56 IPs to solve, as described in \citet{ozlenazizoglu2009ejor}.
This illustrates the way in which many IPs can be avoided (by eliminating
nondominated objective vectors without explicitly generating them)
using the shrinking bound techniques of Algorithm \ref{Flo:alg}.

\section{Conclusion\label{sec:conclusion}}

In this study we propose a general algorithm to optimise a nonlinear
utility function of multiple objectives over the integer efficient
set. As an alternative to the na\"ive method of generating and evaluating
all nondominated objective vectors, we restrict our search to a promising
subset of nondominated vectors by computing and updating bounds on
the individual objectives. The nondominated vectors within this promising
subset are generated using the algorithm of \citet{ozlenazizoglu2009ejor}.
As illustrated by the example in Section \ref{sec:example}, these
bounding techniques can significantly reduce the total number of IPs
to be solved. Because solving IPs is the most computationally expensive
part of the algorithm, we expect these bounding techniques to yield
a significant performance benefit for the algorithm as a whole.

For larger problems that remain too difficult to solve, Algorithm
\ref{Flo:alg} can be used as an \emph{approximation} algorithm. We
can terminate the algorithm at any time, whereupon $G^{BEST}$ and
$G(f_{1}^{LB},f_{2}^{LB},\ldots,f_{k}^{LB})$ will give upper and
lower bounds for the optimal utility, and we will have a feasible
point $x\in X$ for which $G(f_{1}(x),f_{2}(x),\ldots,f_{k}(x))=G^{BEST}$. 

We hope that this study stimulates future work in the field of multi-objective
optimisation. One promising direction for future research may be to
apply our algorithm to specific families of MOCO problems. The special
structure of the constraints in these families might help to improve
the efficiency of our algorithm for nonlinear utility functions.

\section*{Acknowledgements}

The third author is supported by the Australian Research Council under
the Discovery Projects funding scheme (project DP1094516). We thank
the anonymous reviewers for their suggestions.

\vspace{2cm}

\noindent Melih Ozlen\\
School of Mathematical and Geospatial Sciences, RMIT University\\
GPO Box 2476V, Melbourne VIC 3001, Australia\\
(melih.ozlen@rmit.edu.au)

\bigskip{}

\noindent Meral Azizo\u{g}lu\\
Department of Industrial Engineering, Middle East Technical University\\
Ankara 06531, Turkey\\
(meral@ie.metu.edu.tr)

\bigskip{}

\noindent Benjamin A.~Burton\\
School of Mathematics and Physics, The University of Queensland\\
Brisbane QLD 4072, Australia\\
(bab@maths.uq.edu.au)
\end{document}